\newtheorem{lemma}{Lemma}[section]
\newtheorem{corollary}[lemma]{Corollary}
\newtheorem{example}[lemma]{Example}
\newtheorem{definition}[lemma]{Definition}
\newtheorem{proposition}[lemma]{Proposition}
\newtheorem{remark}[lemma]{Remark}
\newcommand{\A}{\mathcal{A}}
\DeclareMathOperator{\diag}{diag} 
\title{\LARGE \bf
Interconnections of input-output Hamiltonian systems with dissipation}
\author{Arjan van der Schaft
\thanks{A.J. van der Schaft is with the Johann Bernoulli Institute for Mathematics and Computer
Science and Jan C. Willems Center for Systems and Control, University of Groningen, PO Box 407, 9700 AK, the
Netherlands,
        {\tt\small A.J.van.der.Schaft@rug.nl}}
}
\begin{document}

\maketitle
\thispagestyle{empty}
\pagestyle{empty}

\begin{abstract}
Recently, negative imaginary and counter-clockwise systems have attracted attention as an interesting class of systems, which is well-motivated by applications. In this paper first the formulation and extension of negative imaginary and counter-clockwise systems as (nonlinear) input-output Hamiltonian systems with dissipation is summarized. Next it is shown how by considering the time-derivative of the outputs a port-Hamiltonian system is obtained, and how this leads to the consideration of alternate passive outputs for port-Hamiltonian systems.
Furthermore, a converse result to positive feedback interconnection of input-output Hamiltonian systems with dissipation is obtained, stating that the positive feedback interconnection of two linear systems is an input-output Hamiltonian system with dissipation if and only if the systems themselves are input-output Hamiltonian systems with dissipation. This implies that the Poisson and resistive structure matrices can be redefined in such a way that the interaction between the two systems only takes place via the coupling term in the Hamiltonian of the interconnected system. Subsequently, it is shown how network interconnection of (possibly nonlinear) input-output Hamiltonian systems with dissipation results in another input-output Hamiltonian system with dissipation, and how this leads to a stability analysis of the interconnected system in terms of the Hamiltonians and output mappings of the systems associated to the vertices, as well as the topology of the network.
\end{abstract}

\section{Properties of input-output Hamiltonian systems with dissipation}

Consider a linear system
\begin{equation}\label{linsystem}
\begin{array}{rcl}
\dot{x} & = & Ax + Bu, \quad x \in \mathbb{R}^n, u \in \mathbb{R}^m\\[2mm]
y & = & Cx + Du, \quad y \in \mathbb{R}^m 
\end{array}
\end{equation}
with transfer matrix $G(s)=C(Is-A)^{-1}B + D$.
In \cite{lanzonpetersen1,lanzonpetersen2,xiong} $G(s)$ was called {\it negative imaginary}\footnote{The terminology 'negative imaginary', stems, similarly to 'positive real', from the Nyquist plot interpretation for single-input single-output systems \cite{lanzonpetersen1,angeli1}.} if $D=D^T$ and the transfer matrix $H(s):=s(G(s)-D)$ is {\it positive real}. In \cite{angeli0,angeli1,angeli2} the same notion (also in a nonlinear context) was coined as {\it counter-clockwise input-output dynamics}.

In \cite{lanzonpetersen1, xiong} it was shown that a minimal system (\ref{linsystem}) has negative imaginary transfer matrix if and only if $D=D^T$ and there exists an $n \times n$ symmetric matrix $Q >0$ such that
\begin{equation}\label{P}
A^TQ + QA \leq 0, \quad B = -AQ^{-1}C^T
\end{equation}
Subsequently in \cite{cdc2011} it was shown, by decomposing $AQ^{-1}$ into its symmetric and skew-symmetric part, that a minimal system (\ref{linsystem}) has negative imaginary transfer matrix if and only if it can be written as
\begin{equation}\label{linearham}
\begin{array}{rcl}
\dot{x} & = & [J-R](Qx - C^Tu) \\[2mm]
y & = & Cx + Du, \quad D=D^T
\end{array}
\end{equation}
for $Q,J,R$ satisfying 
\begin{equation}\label{linearhamprop}
Q=Q^T, \; J=-J^T, \; R=R^T \geq 0
\end{equation}
with $Q >0$.

Any system (\ref{linearham}) satisfying (\ref{linearhamprop}) (not necessarily $Q>0$) was called in \cite{cdc2011} an {\it input-output Hamiltonian system with dissipation} (IOHD system), with {\it Hamiltonian} function $\frac{1}{2}x^TQx$. The skew-symmetric matrix $J$ defines a {\it Poisson structure} matrix, while $R$ is called the {\it resistive structure} matrix; see \cite{bordeaux, vanderschaftmaschkearchive, dalsmo, vanderschaftbook, ortega, NOW} for the closely related port-Hamiltonian case (as discussed below).

The definition of an IOHD system \eqref{linearham} was extended in \cite{cdc2011} to the nonlinear case as follows. For clarity of exposition, we will throughout only consider the case without feedthrough terms and with affine dependence on $u$; see \cite{cdc2011}, as well as Remark 2.3 below, for the general nonlinear case.
\begin{definition}\label{def:nonlinearham}
A system described in local coordinates $x=(x_1,\cdots,x_n)$ for some $n$-dimensional state space manifold $\mathcal{X}$ as\footnote{For a function $H: \mathbb{R}^n \to \mathbb{R}$ we denote by $\frac{\partial H}{\partial x}(x)$ the $n$-dimensional column vector of partial derivatives of $H$. For a mapping $C: \mathbb{R}^n \to \mathbb{R}^m$ we denote by $\frac{\partial C^T}{\partial x}(x)$ the $n \times m$ matrix whose $j$-th column consists of the partial derivatives of the $j$-th component function $C_j$.}
\begin{equation}\label{nonlinearham}
\begin{array}{rcl}
\dot{x} & = & [J(x)-R(x)]\left(\frac{\partial H}{\partial x}(x) - \frac{\partial C^T}{\partial x}(x)u \right),  \quad u \in \mathbb{R}^m \\[3mm]
y & = & C(x), \quad y \in \mathbb{R}^m
\end{array}
\end{equation}
where the $n \times n$ matrices $J(x),R(x)$, depending smoothly on $x$, satisfy 
\begin{equation}\label{nonlinearhamprop}
J(x)=-J^T(x), \; R(x)=R^T(x) \geq 0,
\end{equation}
is called a nonlinear IOHD system, with Hamiltonian $H: \mathcal{X} \to \mathbb{R}$ and output mapping $C: \mathcal{X} \to \mathbb{R}^m$. 
\end{definition}

\smallskip

This definition is a generalization of the definition of an affine {\it input-output Hamiltonian system} as originally proposed in \cite{brockett} and studied in e.g. \cite{vanderschaftMST, vanderschaft1,crouch}. In fact, it reduces to this definition in case $R=0$ (no dissipation) and $J$ defines a {\it symplectic form} (in particular, has full rank). 

\smallskip
The time-evolution of the Hamiltonian of a nonlinear IOHD system \eqref{nonlinearham} is computed as (exploiting skew-symmetry of $J(x)$)
\begin{equation}\label{balance1}
\begin{array}{l}
\frac{d}{dt}H  =  \left(\frac{\partial H}{\partial x}(x)\right)^T [J(x) -R(x)] \left(\frac{\partial H}{\partial x}(x) - \frac{\partial C^T}{\partial x}(x)u \right) =\\[3mm]
  
  \!\! - (\frac{\partial H}{\partial x}(x))^T R(x)\frac{\partial H}{\partial x}(x)\! - \! (\frac{\partial H}{\partial x}(x))^T [J(x) -R(x)]\frac{\partial C^T \!\!}{\partial x}(x)u 
 \end{array}
\end{equation}
Furthermore, the time-differentiated output of \eqref{nonlinearham} is
\begin{equation}\label{nonlinearham2}
z:=\dot{y} = (\frac{\partial C^T}{\partial x}(x))^T [J(x) -R(x)] \left(\frac{\partial H}{\partial x}(x) - \frac{\partial C^T}{\partial x}(x)u \right)
\end{equation}
Using $u^T(\frac{\partial C^T}{\partial x}(x))^TJ(x) \frac{\partial C^T}{\partial x}(x)u =0$ it is verified that the expression \eqref{balance1} can be rewritten as (leaving out arguments $x$)
\begin{equation}\label{balance}
\begin{array}{l}
\frac{d}{dt}H  =  u^Tz - (\frac{\partial H}{\partial x} - u^T \frac{\partial C^T}{\partial x})^TR(\frac{\partial H}{\partial x} - u^T \frac{\partial C^T}{\partial x}) = \\[3mm]
u^Tz - \begin{bmatrix} (\frac{\partial H}{\partial x})^T & \!\! u^T \end{bmatrix}
\! \begin{bmatrix} R & -R\frac{\partial C^T}{\partial x} \\ - (\frac{\partial C^T}{\partial x})^TR & (\frac{\partial C^T}{\partial x})^TR\frac{\partial C^T}{\partial x} \end{bmatrix} 
\! \begin{bmatrix} \frac{\partial H}{\partial x} \\[2mm] u \end{bmatrix}  \\[5mm]
\quad \quad \leq u^Tz
\end{array}
\end{equation}
This immediately shows {\it passivity} with respect to the output $z$ defined by \eqref{nonlinearham2} if additionally the Hamiltonian $H$ is bounded from below. 
In fact, the system \eqref{nonlinearham} with output $y_{PH}=z$ for a general Hamiltonian $H$ is an input-state-output {\it port-Hamiltonian system} \cite{vanderschaftbook, vanderschaftmaschkearchive, dalsmo, ortega}, of the general form \cite{NOW}
\begin{equation}\label{ph}
\begin{array}{rcl}
\dot{x} \!& = & \![J(x)-R(x)] \frac{\partial H}{\partial x}(x) + [G(x) - P(x)] u, \; x \in \mathcal{X} \\[3mm]
y_{PH} \!& = & \![G(x) + P(x)]^T\frac{\partial H}{\partial x}(x) + [M(x) + S(x)] u
\end{array}
\end{equation}
with
\[
\begin{bmatrix} R(x) & P(x) \\ P^T(x) & S(x) \end{bmatrix} \mbox{ symmetric and } \geq 0 ,
\]
and $J(x)$ and $M(x)$ skew-symmetric. This can be seen by equating
\begin{equation}\label{ph-IOHD}
\begin{array}{l}
G(x) = - J(x) \frac{\partial C^T}{\partial x}(x), \; P(x) = - R(x) \frac{\partial C^T}{\partial x}(x), \\[2mm]
S(x) = (\frac{\partial C^T}{\partial x}(x))^TR(x) \frac{\partial C^T}{\partial x}(x), \\[2mm]
M(x) = - (\frac{\partial C^T}{\partial x}(x))^TJ(x) \frac{\partial C^T}{\partial x}(x)
\end{array}
\end{equation}
This leads to
\begin{proposition}
Given the IOHD system \eqref{nonlinearham}. Then its dynamics together with differentiated output $z =\dot{y}$ defined by \eqref{nonlinearham2} is an input-state-output port-Hamiltonian system of the form \eqref{ph} with $y_{PH}=z$. Conversely, given a port-Hamiltonian system \eqref{ph}, then there exists an IOHD system with the same dynamics and output $y=C(x)$, $C: \mathcal{X} \to \mathbb{R}^m$, such that $\dot{y}= y_{PH}$ if and only if $C$ satisfies \eqref{ph-IOHD}.
\end{proposition}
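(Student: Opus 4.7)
The plan is direct verification in both directions: expand $\dot y = \frac{\partial C}{\partial x}(x)\dot x$ along the IOHD state equation, compare the result with the pH output equation, and invoke uniqueness of the symmetric/skew-symmetric decomposition where needed.

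The forward direction is essentially already laid out in the derivation of \eqref{ph-IOHD} preceding the proposition: \eqref{nonlinearham2} is $\frac{\partial C}{\partial x}(x)\dot x$ along the IOHD dynamics, and with $G,P,M,S$ defined by \eqref{ph-IOHD} the state equation reorganizes into the pH form with $u$-term $(G-P)u$, while the output equation $y_{PH}=z$ follows from the transpose identities $(G+P)^T = (\frac{\partial C^T}{\partial x})^T(J-R)$ and $(\frac{\partial C^T}{\partial x})^T(G-P) = M+S$. Skew-symmetry of $M$ follows from that of $J$; and the block matrix involving $R,P,S$ appearing in \eqref{ph} factorizes as $V^T R V$ with $V = \begin{bmatrix} I & -\frac{\partial C^T}{\partial x} \end{bmatrix}$, hence is symmetric and $\geq 0$ since $R \geq 0$.

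For the converse, sufficiency is immediate from the forward direction applied to the IOHD with the same $J,R,H$ as the pH system and output $C$. For necessity, I match coefficients. Equating the coefficient of $u$ in the shared state equation gives $-(J-R)\frac{\partial C^T}{\partial x} = G-P$. Equating the coefficient of $\frac{\partial H}{\partial x}$ in $\dot y = y_{PH}$ and transposing (using $J^T=-J$, $R^T=R$) gives $-(J+R)\frac{\partial C^T}{\partial x} = G+P$. Addition and subtraction isolate $G=-J\frac{\partial C^T}{\partial x}$ and $P=-R\frac{\partial C^T}{\partial x}$. Equating the coefficient of $u$ in $\dot y = y_{PH}$ then yields $M+S = (\frac{\partial C^T}{\partial x})^T(R-J)\frac{\partial C^T}{\partial x}$, and splitting into symmetric and skew-symmetric parts (using skew-symmetry of $M$ and symmetry of $S$) pins down $M$ and $S$ uniquely, completing \eqref{ph-IOHD}. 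The main subtlety I anticipate is the implicit convention that the IOHD shares $J,R,H$ with the pH system, which is what makes \emph{same dynamics} a clean matching-of-coefficients statement rather than leaving the IOHD data partially free.
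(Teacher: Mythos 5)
Your proof is correct and follows essentially the same route as the paper: the paper's entire argument is the identification \eqref{ph-IOHD} displayed just before the proposition (with nonnegativity of the block matrix $\begin{bmatrix} R & P \\ P^T & S\end{bmatrix}$ already implicit in the factorized form appearing in \eqref{balance}), and your forward direction plus converse coefficient matching simply spell this out in detail. The subtlety you flag --- that the IOHD system is understood to share $J$, $R$, $H$ with the port-Hamiltonian system, so that ``same dynamics'' and $\dot y = y_{PH}$ reduce to matching the $u$-coefficients and the terms multiplying $\frac{\partial H}{\partial x}$ --- is indeed the convention the paper tacitly adopts.
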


\smallskip

Note that the conditions \eqref{ph-IOHD} can be interpreted as (generalized) {\it integrability conditions} on $G(x), M(x)$ and $P(x),S(x)$. Indeed, for the special case of a basic input-state-output port-Hamiltonian system
\begin{equation}\label{ph1}
\begin{array}{rcl}
\dot{x} & = & [J(x)-R(x)] \frac{\partial H}{\partial x}(x) + g(x) u \\[3mm]
y_{PH} & = & g^T(x) \frac{\partial H}{\partial x}(x)
\end{array}
\end{equation}
corresponding to $P=0,S=0,M=0$ and $g=G$, the conditions \eqref{ph-IOHD} reduce to
\begin{equation}
\begin{array}{l}
g(x) = G(x)= - J(x) \frac{\partial C^T}{\partial x}(x), \\[2mm]
R(x) \frac{\partial C^T}{\partial x}(x)=0, \quad (\frac{\partial C^T}{\partial x}(x))^TJ(x) \frac{\partial C^T}{\partial x}(x) =0
\end{array}
\end{equation}
The first line implies that the columns $g_j, j=1, \cdots,m$ of the input matrix $g(x)$ are Hamiltonian vector fields with Hamiltonians $-C_1, \cdots, -C_m$. For $J$ corresponding to a symplectic structure, there exist locally such functions $C_1, \cdots,C_m$ if and only if the vector fields $g_j$ leave the symplectic structure invariant \cite{abraham, vanderschaftMST, vanderschaft1}.
\begin{example}
Consider the linear mechanical system as considered in \cite{bordeaux}, consisting of an alternating mass-spring-mass-spring system, where the first input $u_1$ is the velocity of the right-hand side of the second spring, and the first input $u_2$ is the force on the left mass. Furthermore, differently from \cite{bordeaux}, there are dampers with damping coefficients $d_1,d_2$ parallel to the two springs with spring constants $k_1,k_2$. Denoting the masses by $m_1,m_2$, the elongations of the springs by $q_{12}, q_{20}$, and the momenta of the masses by $p_1,p_2$, the dynamical equations are given as
\begin{equation}
\begin{array}{rcl}
\dot{q}_{12} & = & \frac{p_1}{m_1} -  \frac{p_2}{m_2} \\[2mm]
\dot{q}_{20} & = &  \frac{p_2}{m_2} - u_1 \\[2mm]
\dot{p}_1 & = & - k_1q_{12} + u_2\\[2mm]
\dot{p}_2 & = &  k_1q_{12} - k_2q_{20}
\end{array}
\end{equation}
This can be written as the IOHD system 
\begin{equation}
\begin{array}{l}
\begin{bmatrix} 
\dot{q}_{12} \\ \dot{q}_{20} \\ p_1 \\p_2 \\ 
\end{bmatrix} \! = 
[\begin{bmatrix}
0 & 0 &1 & -1 \\
0 & 0 & 0 & 1 \\
-1 & 0 & 0 & 0 \\
1 & -1 & 0 & 0
\end{bmatrix}
 \!- \!
\begin{bmatrix}
0 & 0 &0 & 0 \\
0 & 0 & 0 & 0 \\
0 & 0 & d_1 & -d_1 \\
0 & 0 & -d_1 & d_1 + d_2
\end{bmatrix}] \\[10mm]
\quad \quad \left(\begin{bmatrix} 
\frac{\partial H}{\partial q_{12}} \\[2mm] \frac{\partial H}{\partial q_{20}} \\[2mm] \frac{\partial H}{\partial p_1} \\[2mm] \frac{\partial H}{\partial p_2} \end{bmatrix}
-
\begin{bmatrix}
\frac{\partial C_1}{\partial q_{12}} \\[2mm] \frac{\partial C_1}{\partial q_{20}} \\[2mm] \frac{\partial C_1}{\partial p_1} \\[2mm] \frac{\partial C_1}{\partial p_2} \end{bmatrix} u_1
-
\begin{bmatrix}
\frac{\partial C_2}{\partial q_{12}} \\[2mm] \frac{\partial C_2}{\partial q_{20}} \\[2mm] \frac{\partial C_2}{\partial p_1} \\[2mm] \frac{\partial C_2}{\partial p_2} \end{bmatrix} u_2
\right) \, ,
\end{array}
\end{equation}
with Hamiltonian $H(q_{12},q_{20},p_1,p_2)$ and outputs $ y_1=C_1(q_{12},q_{20},p_1,p_2), y_2 =C_2(q_{12},q_{20},p_1,p_2)$ given by
\begin{equation}
\begin{array}{rcl}
H(q_{12},q_{20},p_1,p_2) & = & \frac{1}{2} k_1q^2_{12} + \frac{1}{2} k_2q^2_{20} + \frac{p_1^2}{2m_1} + \frac{p_2^2}{2m_2}  \\[2mm]
C_1(q_{12},q_{20},p_1,p_2) & = & p_1 + p_2 - d_2 q_{20} \\[2mm]
C_2(q_{12},q_{20},p_1,p_2) & = & q_{12} + q_{20}
\end{array}
\end{equation}
The resulting port-Hamiltonian system with respect to the differentiated outputs $y_{PH1}=\dot{y}_1, y_{PH2}=\dot{y}_2$ is given in the form \eqref{ph}, where $G,P,S,M$ are the constant matrices given as
\begin{equation}
\begin{array}{rccccl}
G &= & \begin{bmatrix} 0 & 0 \\ -1 & 0 \\0 & 1 \\ -d_2 & 0\end{bmatrix}, 
& P &= &\begin{bmatrix} 0 & 0 \\ 0 & 0 \\0 & 0 \\ -d_2 & 0\end{bmatrix}, \\[8mm]
S &= &\begin{bmatrix} d_2 & 0 \\ 0 & 0 \end{bmatrix}, 
& M &= &\begin{bmatrix} 0 & 1 \\ -1 & 0\end{bmatrix}
\end{array}
\end{equation}
Note that even for the case $d_2=0$ the resulting port-Hamiltonian system is not anymore of the basic form \eqref{ph1}, due to the presence of the feedthrough matrix $M$, as already noticed (from a different point of view) in \cite{bordeaux}. 
\end{example}

\smallskip

Obviously, we can consider alternate {\it passive outputs} for the port-Hamiltonian system \eqref{ph}. (An output $\hat{y}_{PH}$ is called a passive output for \eqref{ph} if $\frac{d}{dt}H \leq u^T\hat{y}_{PH}$.)The simplest choice is to define the new passive output $\hat{y}_{PH} := g^T(x)\frac{\partial H}{\partial x}(x)$, with $g(x)=G(x)-P(x)$, resulting in a new input-state-output port-Hamiltonian system of the basic form \eqref{ph1}. In general, given the port-Hamiltonian system \eqref{ph} all outputs
\begin{equation}
\hat{y}_{PH}  =  [\widehat{G}(x) + \widehat{P}(x)]^T\frac{\partial H}{\partial x}(x) + [\widehat{M}(x) + \widehat{S}(x)] u,
\end{equation}
with $\widehat{M}(x)=-\widehat{M}^T(x)$ and $\widehat{G}(x), \widehat{P}(x),\widehat{S}(x)$ satisfying
\begin{equation}
\begin{array}{l}
\widehat{G}(x)-\widehat{P}(x) = G(x)-P(x) \\[2mm]
\begin{bmatrix} R(x) & \widehat{P}(x) \\ \widehat{P}^T(x) & \widehat{S}(x) \end{bmatrix} \geq 0 
\end{array}
\end{equation}
define alternate passive outputs, corresponding to alternate port-Hamiltonian systems. Notice, however, that these alternate outputs can{\it not} always be integrated to outputs of an IOHD system. 
Such new passive outputs were recently used in \cite{borja} for IDA-PBC control, continuing on e.g. \cite{jeltsema}, see also \cite{venkatraman}.

Still a larger class of passive outputs can be obtained by allowing for different $J,R$ and $H$ in \eqref{ph}, in such a way that the dynamics of \eqref{ph} remains the same.
\subsection{Mechanical systems with collocated sensors and actuators}
IOHD systems show up naturally in many applications; see e.g. \cite{lanzonpetersen1, lanzonpetersen2,angeli2, bernstein, brockett, vanderschaftMST, vanderschaft1}. A clear example are mechanical systems with co-located position sensors and force actuators, which in the linear case are represented as the IOHD systems (with $q$ denoting the position vector and $p$ the momentum vector) 
\begin{equation}\label{hamstate}
\begin{array}{rcl}
\begin{bmatrix} \dot{q} \\ \dot{p} \end{bmatrix} & = & \begin{bmatrix} 0_n & I_n \\ -I_n & -D \end{bmatrix} \begin{bmatrix} K & N \\ N^T & M^{-1} \end{bmatrix} \begin{bmatrix} q \\ p \end{bmatrix} + 
\begin{bmatrix} 0 \\ L^T \end{bmatrix} u \\[4mm]
y & = & Lq ,
\end{array}
\end{equation}
where $D \geq 0$ is the damping matrix; defining the resistive structure matrix $R=\diag (0, D)$. 
Usually $N=0$ (no 'gyroscopic forces'), in which case the Hamiltonian (total energy) is given as
\begin{equation}
H(q,p) = \frac{1}{2}q^TKq + \frac{1}{2}p^TM^{-1}p,
\end{equation}
where the first term is the total potential energy (with $K$ the compliance matrix), and the second term is the kinetic energy (with $M$ the mass matrix). 

For $N=0$ \eqref{hamstate} can be rewritten into the equivalent second-order form
\begin{equation}\label{secondorder}
M\ddot{q} + D\dot{q} + Kq=L^Tu, \quad y=Lq
\end{equation} 

\section{Positive feedback interconnection of IOHD systems}

Just like the {\it negative} feedback interconnection of passive (or port-Hamiltonian) systems results in a passive (respectively, port-Hamiltonian) system, the {\it positive} feedback interconnection of IOHD systems results in another IOHD system; see \cite{angeli0, angeli1} for the counter-clockwise case. Indeed, the positive feedback interconnection
\begin{equation}\label{posfeedback}
u_1 = y_2 + e_1, \quad u_2 = y_1 + e_2,
\end{equation}
with $e_1,e_2$ two external inputs, of two {\it linear} IOHD systems
\begin{equation}\label{twoioham}
\begin{array}{rcl}
\dot{x}_i & = & [J_i-R_i]\left(Q_ix_i - C_i^Tu_i \right) \\[2mm]
y_i & = & C_ix_i, \quad i=1,2,
\end{array}
\end{equation}
with equal number of inputs and outputs can be seen \cite{cdc2011} to result in the IOHD system
\begin{equation}\label{interham}
\begin{array}{l}
\begin{bmatrix} \dot{x}_1 \\ \dot{x}_2 \end{bmatrix}  = 
\left( \begin{bmatrix} J_1 & 0 \\ 0 & J_2 \end{bmatrix} - 
\begin{bmatrix} R_1 & 0 \\ 0 & R_2 \end{bmatrix}\right) \\[5mm]
\quad  \quad \left(
\begin{bmatrix} Q_1  & -C_1^TC_2 \\ -C_2^TC_1 & Q_2  \end{bmatrix}
\begin{bmatrix} x_1 \\ x_2 \end{bmatrix}\right. - \\[5mm]
 \quad \quad \quad \quad \quad \quad - \left.
\begin{bmatrix} C_1^T & 0 \\ 0 & C_2^T \end{bmatrix} 
\begin{bmatrix} e_1 \\ e_2 \end{bmatrix} \right) \\[5mm]
\begin{bmatrix} y_1 \\ y_2 \end{bmatrix}  =  
\begin{bmatrix} C_1 & 0 \\ 0 & C_2 \end{bmatrix}\begin{bmatrix} x_1 \\ x_2 \end{bmatrix},
\end{array}
\end{equation}
with {\it interconnected Hamiltonian} given as
\begin{equation}\label{interham1}
H_{\mathrm{int}}(x_1,x_2) := \frac{1}{2}x_1^TQ_1 x_1 + \frac{1}{2}x_2^TQ_2 x_2 - x_1^TC_1^TC_2x_2
\end{equation}
Hence the stability of the interconnected system can be characterized in terms of the interconnected Hamiltonian (\ref{interham1}) as follows.
\begin{proposition}\cite{cdc2011}
Consider two IOHD systems. The interconnected IOHD system (\ref{interham}) is stable having no eigenvalue at zero if the interconnected Hamiltonian (\ref{interham1}) has a strict minimum at the origin $(x_1,x_2) = (0,0)$. Conversely, if (\ref{interham}) is asymptotically stable, then the interconnected Hamiltonian (\ref{interham1}) has a strict minimum at the origin $(x_1,x_2) = (0,0)$.
\end{proposition}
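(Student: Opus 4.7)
The plan is to use the interconnected Hamiltonian $H_{\mathrm{int}}(x) = \tfrac{1}{2} x^T Q x$, with $x := (x_1^T, x_2^T)^T$ and $Q$ the block matrix appearing in \eqref{interham}, as a Lyapunov function for the autonomous closed-loop system obtained by setting $e_1 = e_2 = 0$. Writing the resulting dynamics as $\dot x = (J-R)Q x$ with $J = \mathrm{diag}(J_1,J_2)$ skew-symmetric and $R = \mathrm{diag}(R_1,R_2) \geq 0$, the single key computation
\[
\tfrac{d}{dt} H_{\mathrm{int}} = (Qx)^T (J-R)(Qx) = -(Qx)^T R (Qx) \leq 0,
\]
using $\eta^T J \eta = 0$ for all $\eta$, underlies both implications.

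For the forward direction, the hypothesis that $H_{\mathrm{int}}$ has a strict minimum at the origin is equivalent to $Q > 0$, which makes $H_{\mathrm{int}}$ a positive definite Lyapunov function with nonpositive derivative and immediately yields Lyapunov stability. To exclude $0$ as an eigenvalue of $A := (J-R)Q$, I would suppose $A \xi = 0$ with $\xi \neq 0$, set $\eta := Q \xi$ (nonzero since $Q$ is invertible), and note that $(J-R) \eta = 0$. Premultiplying by $\eta^T$ and using skew-symmetry gives $\eta^T R \eta = 0$, so that $R \eta = 0$ and hence also $J \eta = 0$; by the block-diagonal structure this means each component $\eta_i$ lies in $\ker J_i \cap \ker R_i$. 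A contradiction would then be extracted from an implicit nondegeneracy/minimality property of each IOHD subsystem.

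For the converse, asymptotic stability of \eqref{interham} makes $A = (J-R)Q$ Hurwitz, in particular invertible, which forces $Q$ itself to be invertible. Along any solution, the identity above shows $H_{\mathrm{int}}(x(t))$ is nonincreasing, while $x(t) \to 0$ gives $H_{\mathrm{int}}(x(t)) \to 0$. Therefore $H_{\mathrm{int}}(x(0)) \geq 0$ for every initial condition $x(0)$, i.e.\ $Q \geq 0$; combined with invertibility this yields $Q > 0$, which is precisely the strict-minimum condition on $H_{\mathrm{int}}$.

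The main obstacle is the "no eigenvalue at zero" clause in the forward direction: positive definiteness of $Q$ alone does not prevent $J-R$ from having a nontrivial kernel, and the skew-symmetric cancellation only yields $\eta \in \ker J \cap \ker R$. Turning this into an actual contradiction seems to require genuinely exploiting the block-diagonal form of $(J,R)$ in conjunction with a structural (minimality-type) assumption on each of the two IOHD subsystems, rather than a property of the interconnected $Q$ on its own.
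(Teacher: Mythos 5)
Your converse direction is complete and correct, and your forward-direction stability argument (using $H_{\mathrm{int}}(x)=\tfrac{1}{2}x^T\mathcal{Q}x$ as a Lyapunov function, with $\tfrac{d}{dt}H_{\mathrm{int}}=-(\mathcal{Q}x)^TR\,(\mathcal{Q}x)\leq 0$) is certainly the intended one. Note, however, that the paper itself offers no proof of this proposition at all --- it is imported by citation from \cite{cdc2011} --- so the only things to measure your attempt against are the intended Lyapunov argument and the surrounding text.

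The genuine gap is the one you flagged yourself: the clause ``no eigenvalue at zero'' is left unproven, and it is in fact \emph{not provable} from $\mathcal{Q}>0$ alone. A counterexample: take $n_1=n_2=m=1$, $J_i=R_i=0$, $Q_i=1$, $C_i=0$; then $\mathcal{Q}=I$ so $H_{\mathrm{int}}$ has a strict minimum at the origin, yet the closed-loop matrix is the zero matrix. So the proposition carries an implicit nondegeneracy assumption, and your diagnosis (minimality of the subsystems) is exactly right; what is missing from your attempt is the short argument that closes it. For an IOHD system \eqref{linearham} one has $A_i=(J_i-R_i)Q_i$ and $B_i=-(J_i-R_i)C_i^T$, so every column of $[B_i \;\; A_iB_i \;\; A_i^2B_i \;\cdots]$ lies in $\im(J_i-R_i)$; controllability of $(A_i,B_i)$ therefore forces $J_i-R_i$ to be invertible. (By your own skew-symmetry computation, $\ker(J_i-R_i)=\ker J_i\cap\ker R_i$, so this invertibility is the same as the nondegeneracy you were looking for.) Once each $J_i-R_i$ is invertible, $J-R=\diag(J_1-R_1,\,J_2-R_2)$ is invertible, and $A=(J-R)\mathcal{Q}$ has no zero eigenvalue whenever $\mathcal{Q}>0$, which finishes the forward direction. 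That this assumption is indeed implicitly in force in the paper can be read off from the text immediately following the proposition, where the dc-gains $-C_iA_i^{-1}B_i=C_iQ_i^{-1}C_i^T$ of the subsystems are used: the subsystems are taken to have no pole at the origin, which (for invertible $Q_i$) is precisely invertibility of $J_i-R_i$.
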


\smallskip

In (\cite{lanzonpetersen1}, Theorem 5) it has been shown that $\mathcal{Q}$, with $Q_1>0,Q_2>0$ is positive definite, if and only 
\begin{equation}\label{schur}
\lambda_{\mathrm{max}}\left(C_1Q^{-1}_1C^T_1 \cdot C_2Q_2^{-1}C^T_2\right) < 1 ,
\end{equation}
where $\lambda_{\mathrm{max}}(K)$ denotes the maximal eigenvalue of a symmetric matrix $K$. An easy proof follows by the fact that $\mathcal{Q} >0$ if and only if $Q_1 > 0$ and $Q_2 - C_2^TC_1Q_1^{-1}C_1^TC_2 >0$. This last inequality is equivalent to $Q_2^{-\frac{1}{2}}C_2^TC_1Q_1^{-1}C_1^TC_2Q_2^{-\frac{1}{2}} < I$, and thus to
\[
C_1Q_1^{-1}C_1^T\cdot C_2 Q_2^{-\frac{1}{2}} Q_2^{-\frac{1}{2}}C_2^T < I ,
\]
which is equivalent to \eqref{schur}.

This allows for the following interpretation. The {\it dc-gain} of an IOHD system (\ref{linearham}) with $D=0$ is given by the expression
\begin{equation}\label{dcgain}
-CA^{-1}B= CQ^{-1}C^T
\end{equation}
Hence the interconnected IOHD system (\ref{interham}) resulting from the positive feedback interconnection of two stable IOHD systems ($Q_1>0,Q_2>0$) is stable having no eigenvalue at zero if and only if {\it the dc loop gain is less than unity.} This can be regarded as a rephrasement of the fundamental result concerning the stability of the positive feedback interconnection of two systems with negative imaginary transfer matrices, as obtained in \cite{angeli1} for the SISO case with $D=0$ and in \cite{lanzonpetersen1} for the general MIMO case. 

In the case of positive feedback interconnection of two IOHD systems in second-order form \eqref{secondorder} this amounts to the following corollary.
\begin{corollary} Consider two systems \eqref{secondorder} with $M_i>0,K_i>0, i=1,2$. Then the positive feedback interconnection results in the second-order system
\begin{equation}
\begin{array}{l}
\begin{bmatrix} M_1 & 0 \\ 0 & M_2 \end{bmatrix} \begin{bmatrix} \ddot{q}_1 \\ \ddot{q}_2 \end{bmatrix} +
\begin{bmatrix} D_1 & 0 \\ 0 & D_2 \end{bmatrix} \begin{bmatrix} \dot{q}_1 \\ \dot{q}_2 \end{bmatrix} + \\[4mm]
\begin{bmatrix} K_1 & -L_1^TL_2 \\ -L_2^TL_1 & K_2 \end{bmatrix} \begin{bmatrix} {q}_1 \\ {q}_2 \end{bmatrix} =
\begin{bmatrix} L^T_1 & 0 \\ 0 & L^T_2 \end{bmatrix} \begin{bmatrix} e_1 \\ e_2 \end{bmatrix} \\[4mm]
\begin{bmatrix} y_1 \\ y_2 \end{bmatrix} = \begin{bmatrix} L_1 & 0 \\ 0 & L_2 \end{bmatrix} \begin{bmatrix} q_1 \\ q_2 \end{bmatrix},
\end{array}
\end{equation}
which is stable without eigenvalue at zero if and only if
\begin{equation}\label{schur1}
\lambda_{\mathrm{max}}\left(L_1K^{-1}_1L^T_1 \cdot L_2K_2^{-1}L^T_2 \right) < 1
\end{equation}
\end{corollary}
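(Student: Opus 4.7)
The plan is to reduce the corollary to Proposition 2.4 combined with the Schur-complement argument that produced (\ref{schur}). The key observation is that each second-order system (\ref{secondorder}) was already exhibited in Section 1 as the linear IOHD system (\ref{hamstate}), so the corollary is essentially the specialization of the general stability condition (\ref{schur}) to this mechanical form.

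First I would introduce the momenta $p_i := M_i \dot{q}_i$ and rewrite each subsystem in the state-space IOHD form (\ref{hamstate}) with $N_i = 0$, so that $Q_i = \diag(K_i, M_i^{-1}) > 0$, $C_i = \begin{bmatrix} L_i & 0 \end{bmatrix}$, and $R_i = \diag(0, D_i) \geq 0$. Then I would apply the general interconnection formula (\ref{interham}) to read off the interconnected IOHD system. The only bookkeeping is that the off-diagonal block $-C_1^T C_2$ of the interconnected weight matrix $\mathcal{Q}$ has only its $(q_1,q_2)$ sub-block nonzero, equal to $-L_1^T L_2$, and symmetrically for $-C_2^T C_1$; after permuting the state order to $(q_1,q_2,p_1,p_2)$ and eliminating the momenta via $p_i = M_i \dot{q}_i$, one recovers exactly the displayed second-order coupled system.

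Next I would invoke Proposition 2.4: the interconnected IOHD system is stable without eigenvalue at zero if and only if the interconnected Hamiltonian (\ref{interham1}) has a strict minimum at the origin, i.e.\ $\mathcal{Q} > 0$. Because $Q_i > 0$, the Schur-complement argument already carried out below (\ref{schur}) shows this is equivalent to $\lambda_{\max}(C_1 Q_1^{-1} C_1^T \cdot C_2 Q_2^{-1} C_2^T) < 1$. A direct computation gives
\[
C_i Q_i^{-1} C_i^T = \begin{bmatrix} L_i & 0 \end{bmatrix} \begin{bmatrix} K_i^{-1} & 0 \\ 0 & M_i \end{bmatrix} \begin{bmatrix} L_i^T \\ 0 \end{bmatrix} = L_i K_i^{-1} L_i^T ,
\]
which turns the general condition into the stated inequality (\ref{schur1}).

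The only real subtlety is the converse direction of Proposition 2.4: it is phrased there with ``asymptotically stable'' on the converse side, whereas here one needs ``stable with no eigenvalue at zero'' to imply $\mathcal{Q} > 0$. In the linear setting this is immediate, however: any nonzero vector in $\ker \mathcal{Q}$ produces a static equilibrium trajectory of (\ref{interham}) with $e_1 = e_2 = 0$ and hence an eigenvalue at zero, so the ``no zero eigenvalue'' hypothesis already forces $\ker \mathcal{Q} = \{0\}$; combined with the symmetry of $\mathcal{Q}$ and the Schur-complement characterization this gives $\mathcal{Q} > 0$. Apart from this small point, the proof is pure substitution into the results of Sections 1 and 2.
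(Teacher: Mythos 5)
The reduction portion of your proposal is exactly the paper's (implicit) argument: rewrite each system \eqref{secondorder} as the IOHD system \eqref{hamstate} with $N_i=0$, so that $Q_i=\diag(K_i,M_i^{-1})>0$, $C_i=\begin{bmatrix} L_i & 0 \end{bmatrix}$, $R_i=\diag(0,D_i)$; apply the interconnection formula \eqref{interham}, whose coupling block $-C_1^TC_2$ touches only the position--position entries and produces the displayed coupled stiffness matrix; and compute $C_iQ_i^{-1}C_i^T=L_iK_i^{-1}L_i^T$, which converts the criterion \eqref{schur} into \eqref{schur1}. All of this is correct, and it is the intended route (note only that the stability proposition you call Proposition 2.4 is Proposition 2.1 in the paper's numbering).

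The genuine gap is in your converse direction. From ``stable without eigenvalue at zero'' you extract only that $\ker\mathcal{Q}=\{0\}$, and then assert that invertibility ``combined with the symmetry of $\mathcal{Q}$ and the Schur-complement characterization'' yields $\mathcal{Q}>0$. That inference is false: a symmetric invertible matrix with positive definite diagonal blocks need not be positive definite. Concretely, take $m=1$, $M_i=K_i=1$, $D_i=0$, $L_1=1$, $L_2=2$; the interconnected stiffness matrix $\mathcal{K}=\begin{bmatrix} 1 & -2 \\ -2 & 1 \end{bmatrix}$ is symmetric and invertible but indefinite, and your chain of implications --- which uses only the absence of a zero eigenvalue, never the stability hypothesis itself --- would wrongly conclude $\mathcal{K}>0$. (That example's interconnection is in fact unstable, with eigenvalues $\pm 1,\pm i\sqrt{3}$, consistent with the corollary; the point is that your argument does not detect this.) What is missing is the step showing that \emph{stability} excludes negative eigenvalues of $\mathcal{Q}$. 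In the mechanical setting this can be closed by a Kelvin--Tait--Chetaev-type argument: if $\mathcal{K}$ had a negative eigenvalue, consider the symmetric pencil $P(s)=\diag(M_1,M_2)\,s^2+\diag(D_1,D_2)\,s+\mathcal{K}$ for real $s\geq 0$; then $\lambda_{\min}(P(0))<0$, while $P(s)>0$ for all sufficiently large $s$ because $\diag(M_1,M_2)>0$ and $\diag(D_1,D_2)\geq 0$, so by continuity of $\lambda_{\min}$ there exists $s^*>0$ with $\det P(s^*)=0$, i.e.\ a real positive eigenvalue of the interconnected system, contradicting stability; together with invertibility this forces $\mathcal{K}>0$, and the Schur-complement step then gives \eqref{schur1}. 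Alternatively you may simply invoke the ``if and only if'' dc-gain statement the paper records just above the corollary (its rephrasement of the Lanzon--Petersen negative-imaginary stability theorem), which is what the paper does; but that statement cannot be derived from Proposition 2.1 alone, since the converse part of that proposition assumes asymptotic stability --- precisely the mismatch you noticed but did not actually bridge.
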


\medskip
The positive feedback interconnection of a second-order 'plant' system \eqref{secondorder} with a second-order 'controller' system is known in the literature, see e.g. \cite{goh,friswell}, as {\it positive position feedback}. It has the advantage of being insensitive to spillover, and has favorable other robustness properties, see e.g. \cite{friswell} for a discussion. Note that by \eqref{schur1} the stability of the closed-loop system only depends on the potential energies of the 'plant' and 'controller' system and on the matrices $L_1,L_2$. In particular the stability does depend not on the damping matrices $D_1,D_2$.

\smallskip

Similar to the linear case it can be seen \cite{cdc2011} that the positive feedback interconnection of two nonlinear IOHD systems\footnote{For the generalization to general nonlinear IOHD systems see \cite{cdc2011}.} \eqref{nonlinearham}, indexed by $i=1,2,$ 
results in the nonlinear IOHD system 
\begin{equation}\label{nonlinearinterham}
\begin{array}{l}
\begin{bmatrix} \dot{x}_1 \\ \dot{x}_2 \end{bmatrix}  = 
\left( \begin{bmatrix} J_1(x_1) & 0 \\ 0 & J_2(x_2) \end{bmatrix} - 
\begin{bmatrix} R_1(x_1) & 0 \\ 0 & R_2(x_2) \end{bmatrix}\right) \\[5mm]
\quad \left(
\begin{bmatrix} \frac{\partial H_{\mathrm{int}}}{\partial x_1}(x_1, x_2) \\ \frac{\partial H_{\mathrm{int}}}{\partial x_2}(x_1, x_2)\end{bmatrix}   -
\begin{bmatrix} \frac{\partial C_1^T}{\partial x_1}(x_1) & 0 \\ 0 & \frac{\partial C_2^T}{\partial x_2}(x_2) \end{bmatrix} 
\begin{bmatrix} e_1 \\ e_2 \end{bmatrix} \right) \\[7mm]
\begin{bmatrix} y_1 \\ y_2 \end{bmatrix}  =  
\begin{bmatrix} C_1(x_1)  \\ C_2(x_2) \end{bmatrix},
\end{array}
\end{equation}
with interconnected Hamiltonian $H_{\mathrm{int}}$ given by
\begin{equation}\label{nonlinearinterham1}
H_{\mathrm{int}}(x_1,x_2) := H_1(x_1) + H_2(x_2) - C_1^T(x_1)C_2(x_2)
\end{equation}
(compare with \cite{angeli1}, Theorem 6).
Like in the linear case, the stability properties of the interconnected system are determined by $H_{\mathrm{int}}$.

\begin{remark}
A {\it general nonlinear IOHD system} is defined in \cite{cdc2011} as a system of the form
\begin{equation}\label{nonlinearhamgeneral}
\begin{array}{rcl}
\dot{x} & = & (J(x)-R(x))\frac{\partial H}{\partial x}(x,u),  \quad u \in \mathbb{R}^m, \\[3mm]
y & = & - \frac{\partial H}{\partial u}(x,u), \quad y \in \mathbb{R}^m,
\end{array}
\end{equation}
for some function $H(x,u)$, with $R(x),J(x)$ satisfying (\ref{nonlinearhamprop}). Furthermore, it is shown in \cite{cdc2011} how also the positive feedback interconnection of two general IOHD systems results in another general IOHD system, provided some transversality conditions are met.
A particular case of \eqref{nonlinearhamgeneral} is a {\it static} system
\begin{equation}\label{hamstatic}
y  =  - \frac{\partial P}{\partial u}(u), \quad u, y \in \mathbb{R}^m,
\end{equation}
for some function $P: \mathbb{R}^m \to \mathbb{R}$. The positive feedback interconnection of an IOHD system \eqref{nonlinearham} with such a static IOHD system \eqref{hamstatic} results in the IOHD system \eqref{nonlinearham}, with modified Hamiltonian given as
\begin{equation}
H_{cl}(x) :=H(x) + P(C(x))
\end{equation}
Conversely, it can be shown \cite{nvds} that any static output feedback applied to \eqref{nonlinearham} will result in an IOHD system with respect to the same $J(x),R(x)$ if and only it corresponds to positive feedback interconnection with a static IOHD system \eqref{hamstatic}  for some $P$.
\end{remark}

\smallskip

We note that the Poisson and resistive structure of the interconnected system \eqref{nonlinearinterham} is the direct sum of the respective structures of the two component systems. This is opposite to the case of the negative feedback interconnection of two port-Hamiltonian systems \cite{vanderschaftbook, NOW}, where the Poisson structure matrix of the interconnected contains an additional {\it coupling term}, and where, on the other hand, the Hamiltonian of the interconnected system is just the sum of the Hamiltonians of the two component systems; see also \cite{cdc2011}.

\section{A converse result}
In this section we will show that not only the positive feedback interconnection of two IOHD systems results in another IOHD system, but that, at least in the linear case, {\it also the converse holds}: if the positive feedback interconnection of two arbitrary linear systems is an IOHD system (with inputs $e_1,e_2$ and outputs $y_1,y_2$), then the two systems are necessarily IOHD as well. 

This result can be seen as an analog to the converse result obtained for the {\it negative} feedback interconnection of {\it passive} systems in \cite{cdc2011a}.
\begin{proposition}\label{prop:converse}
Consider two linear systems $\Sigma_1=(A_1,B_1,C_1), \Sigma_2=(A_2,B_2,C_2)$ with equal input and output dimensions. Suppose the positive feedback interconnection \eqref{posfeedback} of $\Sigma_1$ and $\Sigma_2$ results in a minimal system, with inputs $e_1,e_2$ and outputs $y_1,y_2$, that is IOHD. Then also $\Sigma_1, \Sigma_2$ are IOHD systems.
\end{proposition}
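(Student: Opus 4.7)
The strategy is to rewrite the positive feedback interconnection as the \emph{decoupled} open-loop system $\mathrm{diag}(\Sigma_1,\Sigma_2)$ closed by positive output feedback with the static symmetric gain $\tilde K=\begin{bmatrix}0&I_m\\ I_m&0\end{bmatrix}$, and then to extract individual IOHD data of each $\Sigma_i$ block-wise from the \emph{same} $(\mathcal J,\mathcal R)$ used for the closed loop. The guiding intuition is Remark~2.3: because $\tilde K$ is symmetric, this feedback corresponds to positive interconnection with a static IOHD system and therefore preserves the IOHD class with unchanged $\mathcal J,\mathcal R$, in both directions. The only real work is to see what preservation ``in the reverse direction'' buys at the level of the blocks.

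Concretely, starting from any IOHD representation $(\mathcal Q,\mathcal J,\mathcal R)$ of the closed-loop system, I will set $\mathcal Q_o:=\mathcal Q+\mathcal C^T\tilde K\mathcal C$. A one-line computation using $\mathcal A=(\mathcal J-\mathcal R)\mathcal Q$ and $\mathcal B=-(\mathcal J-\mathcal R)\mathcal C^T$ gives
\[
(\mathcal J-\mathcal R)\mathcal Q_o=\mathcal A-\mathcal B\tilde K\mathcal C=\mathrm{diag}(A_1,A_2),
\]
while $-(\mathcal J-\mathcal R)\mathcal C^T=\mathrm{diag}(B_1,B_2)$ is unchanged. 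Hence $(\mathcal Q_o,\mathcal J,\mathcal R)$ is a valid IOHD representation of the decoupled open-loop system.

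Writing $\mathcal E:=\mathcal J-\mathcal R$ and $\mathcal Q_o$ in $2\times 2$ block form, I read from $\mathcal E\mathcal C^T=-\mathrm{diag}(B_1,B_2)$ that $E_{ii}C_i^T=-B_i$ and $E_{ij}C_j^T=0$ for $i\neq j$, and from $\mathcal E\mathcal Q_o=\mathrm{diag}(A_1,A_2)$ that in particular $E_{11}(Q_o)_{12}+E_{12}(Q_o)_{22}=0$. I then propose
\[
Q_1:=(Q_o)_{11}-(Q_o)_{12}(Q_o)_{22}^{-1}(Q_o)_{12}^T,\qquad J_1:=\tfrac12(E_{11}-E_{11}^T),\qquad R_1:=-\tfrac12(E_{11}+E_{11}^T),
\]
with analogous $Q_2$ the Schur complement of $(Q_o)_{11}$ and $J_2,R_2$ extracted from $E_{22}$. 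Then $Q_1$ is symmetric (Schur complement of a symmetric matrix), $J_1$ is skew, and $R_1\geq 0$ as the $(1,1)$ diagonal block of $\mathcal R\geq 0$. Substituting $E_{11}(Q_o)_{12}=-E_{12}(Q_o)_{22}$ into $E_{11}Q_1$ collapses it to $E_{11}(Q_o)_{11}+E_{12}(Q_o)_{12}^T=A_1$, which together with $B_1=-E_{11}C_1^T$ yields $A_1=(J_1-R_1)Q_1$ and $B_1=-(J_1-R_1)C_1^T$; thus $\Sigma_1$ is IOHD, and $\Sigma_2$ analogously.

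The hardest point to pin down cleanly is the invertibility of $(Q_o)_{ii}$ needed for the Schur-complement construction, and --- if ``IOHD'' is to be read in the strict $\mathcal Q>0$ sense corresponding to a negative-imaginary transfer matrix --- the positivity of each $Q_i$. Minimality of the interconnection already gives via feedback-invariance of controllability (for $\mathcal A=\mathcal A_o+\mathcal B\tilde K\mathcal C$) and output-injection-invariance of observability (for $\mathcal A=\mathcal A_o+(\mathcal B\tilde K)\mathcal C$) that each $(A_i,B_i,C_i)$ is itself minimal, and a density argument over admissible representations $(\mathcal Q,\mathcal J,\mathcal R)$ should handle the non-invertible case. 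For the $\mathcal Q>0$ variant, one has as a back-up the transfer-matrix identity $j(\mathcal G_o-\mathcal G_o^*)=(I+\tilde K\mathcal G)^{-*}\,[j(\mathcal G-\mathcal G^*)]\,(I+\tilde K\mathcal G)^{-1}$ at $s=j\omega$, which immediately yields that $\mathcal G_o=\mathrm{diag}(G_1,G_2)$ is NI, whence each $G_i$ is NI by block-diagonality of $\mathrm{Im}(\mathcal G_o)$.
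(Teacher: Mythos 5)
Your proposal is correct in substance, but it takes a genuinely different route from the paper. The paper never leaves the closed loop: it sets $P=\mathcal{Q}^{-1}$ and works with the conditions $P\mathcal{A}^T+\mathcal{A}P\le 0$, $\mathcal{B}=-\mathcal{A}P\mathcal{C}^T$ in block form, solves the off-diagonal blocks of the second condition for $B_1,B_2$, substitutes back to obtain $B_i=-A_iP_iC_i^T$ with $P_1=P_{11}-P_{12}C_2^T(C_2P_{22}C_2^T)^{-1}C_2P_{12}^T$ (a Schur complement of $\diag(I,C_2)\,P\,\diag(I,C_2^T)$; the paper's displayed definition drops a factor $C_2$ by an evident typo), and then checks that the diagonal blocks of the Lyapunov inequality collapse to $P_iA_i^T+A_iP_i\le 0$; the subsystem Hamiltonians are $\frac{1}{2}x_i^TP_i^{-1}x_i$. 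You instead recognize the interconnection as static symmetric output feedback $\tilde K$ around $\diag(\Sigma_1,\Sigma_2)$ and undo it on the energy side, $\mathcal{Q}_o=\mathcal{Q}+\mathcal{C}^T\tilde K\mathcal{C}$ with $(\mathcal{J},\mathcal{R})$ untouched --- Remark 2.3 run backwards --- and your identities $(\mathcal{J}-\mathcal{R})\mathcal{Q}_o=\diag(A_1,A_2)$ and $E_{11}Q_1=E_{11}(Q_o)_{11}+E_{12}(Q_o)_{12}^T=A_1$ are all verified. Your route buys things the paper's does not: it never inverts the closed-loop $\mathcal{Q}$ (the paper's opening step posits $P=\mathcal{Q}^{-1}$, justified only by an appeal to minimality), it produces the block-diagonal Poisson and resistive structure directly (the paper only extracts this in the discussion after the proof, by re-decomposing $A_iP_i$), and it gives minimality of each $\Sigma_i$ as a by-product. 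The paper's route buys the definite case for free: $P>0$ passes to Schur complements, so $P_1,P_2>0$ and negative imaginariness of the subsystems is immediate, whereas your shift $\mathcal{C}^T\tilde K\mathcal{C}$ is indefinite, $\mathcal{Q}_o$ loses positivity, and you must fall back on the (correct) transfer-matrix congruence identity for that case.

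The one genuine soft spot is the one you name yourself: invertibility of $(Q_o)_{11}$ and $(Q_o)_{22}$. Your proposed repair by ``density over admissible representations'' is a hope, not an argument --- the admissible triples $(\mathcal{Q},\mathcal{J},\mathcal{R})$ of a fixed system form a constrained set, and showing that one can perturb inside it to make $(Q_o)_{ii}$ invertible while preserving $\mathcal{R}\ge 0$ is essentially the original difficulty in disguise. You should know, however, that the paper's proof carries exactly parallel unproven regularity assumptions: it inverts $C_2P_{22}C_2^T$ and $C_1P_{11}C_1^T$ without justification (full row rank of $C_i$ plus invertibility of $P$ do not imply this; consider $P=\left[\begin{smallmatrix}0&1\\1&0\end{smallmatrix}\right]$ with scalar blocks and $C_1=C_2=1$), and even its claim that minimality forces $\mathcal{Q}$ to be invertible fails for general IOHD systems: the integrator $\dot{x}=u$, $y=x$ is minimal and IOHD with $J-R=-1$, $Q=0$, and admits no IOHD representation with invertible $Q$. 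So your argument is no less rigorous than the paper's; both need either a genericity hypothesis or a finer analysis at the same spot.
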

\begin{proof}
Without loss of generality assume that $C_1$ and $C_2$ have full row rank. Since the positive feedback interconnection of $\Sigma_1$ and $\Sigma_2$ is a minimal IOHD system, there exists an invertible matrix $P=Q^{-1}$, cf. \eqref{P}, such that
\begin{equation}\label{4}
\begin{array}{l}
\begin{bmatrix} P_{11} & P_{12} \\ P_{12}^T & P_{22} \end{bmatrix}
\begin{bmatrix} A_1^T & C_1^TB_2^T \\ C_2^T B_1^T & A_2^T \end{bmatrix} + \\[4mm]
\quad \quad \quad
\begin{bmatrix} A_1 & B_1 C_2 \\ B_2 C_1 & A_2 \end{bmatrix} 
\begin{bmatrix} P_{11} & P_{12} \\ P_{12}^T & P_{22} \end{bmatrix} \leq 0
\end{array}
\end{equation}
and 
\begin{equation}\label{5}
\begin{bmatrix} B_1 & 0 \\ 0 & B_2 \end{bmatrix} = - \begin{bmatrix} A_1 & B_1 C_2 \\ B_2 C_1 & A_2 \end{bmatrix} 
\begin{bmatrix} P_{11} & P_{12} \\ P_{12}^T & P_{22} \end{bmatrix} \begin{bmatrix} C^T_1 & 0 \\ 0 & C^T_2 \end{bmatrix}
\end{equation}
Then \eqref{5} is written out as
\begin{equation}\label{5a}
\begin{array}{rcl}
B_1 & = & -A_1P_{11}C_1^T - B_1C_2P_{12}^TC_1^T \\[2mm]
B_2 & = & -A_2P_{22}C_2^T - B_2C_1P_{12}C_2^T 
\end{array}
\end{equation}
\begin{equation}\label{5b}
\begin{array}{l}
A_1P_{12}C_2^T + B_1C_2P_{22}^TC_2^T = 0\\[2mm]
A_2P_{12}^TC_1^T + B_2C_1P_{11}C_1^T =0
\end{array}
\end{equation}
Then the equations \eqref{5b} yield
\begin{equation}\label{6}
\begin{array}{rcl}
B_1 & = & -A_1P_{12}C_2^T(C_2P_{22}C_2^T)^{-1} \\[2mm]
B_2 & = & -A_2P_{12}^TC_1^T(C_1P_{11}C_1^T)^{-1}
\end{array}
\end{equation}
Substituted in the righthand sides of \eqref{5a} this yields
\begin{equation}\label{7}
\begin{array}{rcl}
B_1 & = & - A_1(P_{11} -P_{12}C_2^T(C_2P_{22}C_2^T)^{-1}P_{12}^T)C_1^T \\[2mm]
B_2 & = & - A_2(P_{22} -P_{12}^TC_1^T(C_1P_{11}C_1^T)^{-1}P_{12})C_2^T
\end{array}
\end{equation}
Define subsequently
\begin{equation}
\begin{array}{rcl}
P_1 := P_{11} -P_{12}C_2^T(C_2P_{22}C_2^T)^{-1}P_{12}^T \\[2mm]
P_2 := P_{22} -P_{12}^TC_1^T(C_1P_{11}C_1^T)^{-1}P_{12}
\end{array}
\end{equation}
Note that the first expression corresponds to a Schur complement of
\begin{equation}
\begin{bmatrix} P_{11} & P_{12}C_2^T \\ c_2P_{12}^T & C_2P_{22}C_2^T \end{bmatrix} =
\begin{bmatrix} I & 0 \\ 0 & C_2\end{bmatrix} \begin{bmatrix}P_{11} & P_{12} \\ P_{12}^T & P_{22} \end{bmatrix}
\begin{bmatrix} I & 0 \\ 0 & C_2^T\end{bmatrix}
\end{equation}
Consequently, $P_1$ is invertible, and positive definite if $P$ is positive definite. Similarly, $P_1$ is invertible, and positive definite if $P$ is positive definite.

Finally, write out the $(1,1)$ and $(2,2)$ block of \eqref{4} as
\begin{equation}\label{9}
\begin{array}{rcl}
P_{11}A_1^T + A_1P_{11} + P_{12}C_2^TB_1^T + B_1C_2P_{12}^T & \leq & 0\\[2mm]
P_{22}A_2^T + A_2P_{22} + P_{12}^TC_1^TB_2^T + B_2C_1P_{12} & \leq & 0
\end{array}
\end{equation}
Substitution of the first line of \eqref{6} into the first line of \eqref{9} yields
\begin{equation}\label{10}
\begin{array}{l}
P_{11} A_1^T + A_1P_{11} - P_{12}C_2^T(C_2P_{22}C_2^T)^{-1}C_2P_{12}^TA_1^T - \\[2mm]
\quad \quad A_1P_{12}C_2^T(C_2P_{22}C_2^T)^{-1}C_2P_{12}^T \leq 0,
\end{array}
\end{equation}
which can be rewritten as
\begin{equation}\label{11}
\begin{array}{l}
[P_{11} - P_{12}C_2^T(C_2P_{22}C_2^T)^{-1}C_2P_{12}^T] A_1^T + \\[2mm]
\quad \quad A_1[P_{11} - P_{12}C_2^T(C_2P_{22}C_2^T)^{-1}C_2P_{12}^T] \leq 0,
\end{array}
\end{equation}
that is, $P_1A_1^T + A_1P_1 \leq 0$. Similar derivation holds for $P_2$, altogether resulting in
\begin{equation}
P_1A_1^T + A_1P_1 \leq 0, \quad P_2A_2^T + A_2P_2 \leq 0
\end{equation}
Furthermore, \eqref{7} is identical to
\begin{equation}
B_1 = -A_1P_1C_1^T, \quad B_2=-A_2P_2C_2^T
\end{equation}
Thus $(A_1,B_1,C_1)$ and $(A_2,B_2,C_2)$ are IOHD systems with Hamiltonians $\frac{1}{2}x_1^TP_1^{-1}x_1, \frac{1}{2}x_2^TP_2^{-1}x_2$ (and are negative imaginary if $P>0$, and thus $P_1>0,P_2>0$).
\end{proof}

\smallskip

By decomposing $A_iP_i, i=1,2,$ into their skew-symmetric and symmetric parts as $A_iP_i = J_i -R_i, i=1,2,$ it follows that the positive feedback interconnection of $\Sigma_1$ and $\Sigma_2$ is alternatively given as the IOHD system with respect to the Poisson structure and resistive structure matrix
\begin{equation}
J= \begin{bmatrix} J_1 & 0 \\0 & J_2 \end{bmatrix}, \quad R= \begin{bmatrix} R_1 & 0 \\0 & R_2 \end{bmatrix},
\end{equation}
which is in general {\it different} from the originally assumed Poisson structure and resistive structure matrix (containing in general coupling terms between the two component systems). Hence Proposition \ref{prop:converse} stipulates that for an interconnected system which is IOHD the Poisson structure and resistive structure matrices can be always {\it redefined} as being block-diagonal, in which case, as a result, the coupling between the component systems $\Sigma_1$ and $\Sigma$ arises only from the coupling term in the interconnected Hamiltonian \eqref{interham1}.

Note finally that an essential part in the above proof is the fact that we consider a 'full' positive feedback interconnection \eqref{posfeedback}. In particular, a positive feedback interconnection of the form $u_1 = y_2 + e_1, \quad u_2 = y_1$ (no $e_2$ input) of two linear IOHD systems will result in an IOHD system with inputs $e_1$ and $y_1$, but the converse need not hold in this case.

\section{Networks of IOHD systems}
In this section it will be shown how the fact that the positive feedback interconnection of two IOHD systems is another IOHD system can be naturally extended to {\it networks} of IOHD systems. 

Consider an undirected graph with $N$ vertices and $M$ edges, together with an $N \times N$ weighted {\it adjacency matrix} $\A$ (symmetric since the graph is undirected). Furthermore, take the multi-agent point of view by associating to each of the vertices $i \in \{1,\cdots,N\}$ a nonlinear IOHD system \eqref{nonlinearham}, indexed by $i$, with equal number of inputs and outputs $m$ (independent of $i$). These IOHD systems are interconnected by setting
\begin{equation}
u =  \left( \A \otimes I_m \right) y + e,
\end{equation}
where $u$ is the stacked $Nm$ vector, with subvectors $u_1, \cdots, u_N$, and analogously for $y$ and the external inputs $e$. (Here $\A \otimes I_m$ denotes the Kronecker product of $\A$ and $I_m$, i.e., the $Nm \times Nm$ matrix obtained by multiplying every element of $\A$ by the $m \times m$ identity matrix $I_m$.)
Note that for the special case $N=2, M=1, \A_{12}=\A_{12}=1,$ this reduces to the positive feedback interconnection \eqref{posfeedback}. 

As in Section II it can be readily seen that the resulting multi-agent system is again an IOHD system with inputs $e_1, \cdots, e_N$ and outputs $y_1, \cdots, y_N$, and total interconnected Hamiltonian given as
\begin{equation}\label{network}
\begin{array}{l}
H_{\mathrm{int}}(x_1,\cdots,x_N) :=  H_1(x_1) + \cdots + H_N(x_N) - \\[3mm]
\quad \quad \quad \frac{1}{2}\sum_{i,j=1}^N \A_{ij} (C_i(x_i))^TC_j(x_j) ,
\end{array}
\end{equation}
where $x_i,H_i,C_i, i=1, \cdots,N,$ are respectively the state vectors, Hamiltonians and output mappings of the IOHD systems associated to the vertices. Stability of the resulting IOHD system is again determined by this interconnected Hamiltonian.

Another scenario, more similar to the one considered in \cite{wang2015, xiong}, is to consider a {\it directed} graph, with $N$ vertices and $M$ edges and $N \times M$ incidence matrix $D$, and to associate not only to each of the vertices $i \in \{1,\cdots,N\}$ a nonlinear IOHD system \eqref{nonlinearham} with equal number of inputs and outputs $m$, but also to each of the edges $k \in \{1,\cdots,M\}$. These IOHD systems are now naturally interconnected by setting
\begin{equation}\label{posnetwork}
\begin{bmatrix}
u^v \\ u^e 
\end{bmatrix}
= \begin{bmatrix}
0 & D\otimes I_m \\ D^T\otimes I_m & 0 
\end{bmatrix}
\begin{bmatrix}
y^v \\ y^e 
\end{bmatrix} +
\begin{bmatrix}
e^v \\ e^e 
\end{bmatrix}
\end{equation}
Here $u^v$ and $y^v$ are the stacked $Nm$ vectors of inputs and outputs of the IOHD systems associated to the vertices, and $u^e$ and $y^e$ are the stacked $Mm$ vectors of inputs and outputs of the IOHD systems associated to the edges, and $e^v, e^e$ are the external inputs associated to the vertices, respectively edges. The resulting system is again an IOHD system, with total Hamiltonian being given by
\begin{equation}\label{network1}
\begin{array}{l}
H(x^v_1,\cdots,x^v_N, x^e_1,\cdots,x^e_M) :=  \\[3mm] H^v_1(x^v_1) + \cdots + H^v_N(x^v_N)  + 
H^e_1(x^e_1) + \cdots + H^e_M(x^e_M) \, - \\[3mm]
\quad \quad \sum_{i=1,k=1}^{N,M} D_{ik} (C^v_i(x^v_i))^TC^e_k(x^e_k)
\end{array}
\end{equation}
Here the superscripts ${}^v$ throughout refer to the vertex IOHD systems, and the superscripts ${}^e$ to the edge IOHD systems. In particular, $x^v_i,H^v_i,C^v_i, i=1, \cdots,N,$ are the state vectors, Hamiltonians and output mappings of the vertex IOHD systems, and $x^e_k,H^e_k,C^e_k, k=1, \cdots,M,$ are the state vectors, Hamiltonians and output mappings of the edge IOHD systems. 

Note that this last setting is {\it different} from the network interconnection of passive systems associated to the vertices and edges of a directed graph as considered in \cite{arcak} (or in the port-Hamiltonian case in \cite{siamgraphs}), since the interconnection \eqref{posnetwork} again corresponds to {\it positive} feedback.

\section{Conclusions and outlook}
The class of input-output Hamiltonian systems with dissipation has been further explored as an interesting class of nonlinear systems, which is well-motivated by applications, and closely related to port-Hamiltonian systems and passivity(-based) control. A striking result is the converse result obtained (for linear systems) in Section III, which has the interesting implication that the Poisson and resistive structure matrices of an interconnected system that is IOHD can be always redefined in such a way that the coupling between the subsystems is only via the coupling term in the interconnected Hamiltonian. This is a case that happens quite frequently in modeling of multi-physics systems.

Of course, an extension of this converse result to the nonlinear case is a topic for further research, as well as the exploration of applications of the network interconnection theory of IOHD systems initiated in Section IV.


\begin{thebibliography}{99.}

\bibitem{abraham}
R. A. Abraham, J. E. Marsden.
\newblock {\it Foundations of mechanics} (2nd edition),
Benjamin/Cummings, Reading, Mass. 1978.

\bibitem{angeli0}
D. Angeli,
\newblock On systems with counter-clockwise input/output dynamics.
Proceedings {\it 43th IEEE Conference on Decision and Control}, Bahamas, Dec. 2004, pp. 2527--2532.

\bibitem{angeli1}
D. Angeli, 
\newblock Systems with counterclockwise input-output dynamics.
{\it IEEE Trans. Automatic Control}, 51(7), 1130--1143, 2006.

\bibitem{angeli2}
D. Angeli, 
\newblock Multistability in systems with counterclockwise input-output dynamics.
{\it IEEE Trans. Automatic Control}, 52(4), 596--609, 2007.

\bibitem{arcak}
M.~Arcak.
\newblock Passivity as a design tool for group coordination.
\newblock {\em Automatic Control, IEEE Transactions on}, 52(8):1380 --1390, 2007.

\bibitem{borja}
P. Borja, R. Cisneros, R. Ortega,
"Shaping the energy of port-Hamiltonian systems without solving PDE's",
In Proc. 54th {\it IEEE Conference on Decision and Control}, Osaka, Dec. 2015

\bibitem{brockett}
R.W. Brockett,
\newblock Control theory and analytical mechanics,
pp. 1--46 in {\it Geometric control theory}, eds. C. Martin, R. Hermann, Vol. VII of {\it Lie Groups: History, Frontiers and Applications}, Math Sci Press, Brookline, 1977.

\bibitem{crouch} P.E. Crouch, A.J. van der Schaft: {\it Variational and Hamiltonian control systems}.  Lectures Notes in Control and Inf. Sciences 101, Springer-Verlag, New York, 1987.
   
\bibitem{dalsmo}
M. Dalsmo, A.J. van der Schaft, 
``On representations and integrability of
mathematical structures in energy-conserving physical systems'', 
{\it SIAM J. Control and Optimization, vol.37}, pp. 54--91, 1999.

\bibitem{friswell}
M. Friswell, D.J. Inman,
"The relationship between positive position feedback and output feedback controllers",
{\it Smart Mater. Struct.}, vol. 8, pp. 285-291, 1999.

\bibitem{goh}
C.J. Goh, T.K. Caughey,
"On the stability problem caused by finite actuator dynamics in the control of large space structures",

\bibitem{jeltsema}
D. Jeltsema, R. Ortega, J.M.A. Scherpen, "An energy-balancing perspective of interconnection and damping assignment control of nonlinear systems", {\it Automatica} 40: 1643Ð1646, 2004.

\bibitem{cdc2011a}
F. Kerber, A.J. van der Schaft,
"Compositional properties of passivity",
50th {\it IEEE Conference on Decision and Control and European Control Conference} (CDC-ECC) Orlando, FL, USA, December 12-15, pp. 4628--4633, 2011.

\bibitem{lanzonpetersen1}
A. Lanzon, I.R. Petersen,
\newblock Stability robustness of a feedback interconnection of systems with negative imaginary frequency response,
{\it IEEE Trans. Automatic Control}, 53(4), 1042--1046, 2008.

\bibitem{bordeaux}
B. Maschke, A.J. van der Schaft, ``Port-controlled Hamiltonian systems: modelling
origins and system theoretic properties'', pp. 282-288 in {\it Proceedings 2nd IFAC Symposium on Nonlinear Control Systems (NOLCOS 2004)}, Ed. M. Fliess, Bordeaux, France, 
June 1992.

\bibitem{nvds}
H. Nijmeijer, A.J. van der Schaft, {\it Nonlinear Dynamical Control Systems},
Springer-Verlag, New York, 1990 (4th printing 1998), p. xiii+467. Corrected printing 2016.

\bibitem{ortega}
R.~Ortega, A.J. van~der Schaft, B.M. Maschke, and G.~Escobar.
\newblock Interconnection and damping assignment passivity-based control of
port-controlled Hamiltonian systems. {\em Automatica},
38:585--596, 2002. 

\bibitem{bernstein}
A. K. Padthe, JinHyoung Oh, D. S. Bernstein,
Counterclockwise dynamics of a rate-independent semilinear Duhem model,
Proceedings {\it 44th IEEE Conference on Decision and Control, and the European Control Conference}, Seville, Spain, December 12-15, 2005.

\bibitem{lanzonpetersen2}
I.R. Petersen, A. Lanzon,
\newblock Feedback control of negative-imaginary systems, 
{\it IEEE Control Systems Magazine}, 30(5), 54--72, 2010.

\bibitem{vanderschaftMST}
A.J. van der Schaft,
\newblock Hamiltonian dynamics with external forces and observations, 
{\it Mathematical Systems Theory}, 15, 145--168, 1982.

\bibitem{vanderschaft1}
A.J. van der Schaft, 
\newblock Observability and controllability for smooth nonlinear systems, 
{\it SIAM J. Control \& Opt.}, 20, pp. 338--354, 1982.

\bibitem{vanderschaftbook}
A.J. van der Schaft, {\it $L_2$-Gain and Passivity Techniques in
Nonlinear Control}, Lect. Notes in Control and Information
Sciences, Vol. 218, Springer-Verlag, Berlin, 1996, p. 168, 2nd
revised and enlarged edition, Springer-Verlag, London, 2000
(Springer Communications and Control Engineering series), p.xvi+249.

\bibitem{vanderschaftmaschkearchive}
A.J. van der Schaft, B.M. Maschke, The Hamiltonian formulation of energy conserving physical systems with external ports, {\it Archiv f\"{u}r Elektronik
 und \"{U}bertragungstechnik, 49}, pp. 362--371, 1995.
 
 \bibitem{siamgraphs}
A.J. van der Schaft, B.M. Maschke, "Port-Hamiltonian systems on graphs", 
{\it SIAM J. Control Optim.}, 51(2), 906--937, 2013.

 \bibitem{cdc2011}
 A.J. van der Schaft,
 "Positive feedback interconnection of Hamiltonian systems",
50th {\it IEEE Conference on Decision and Control and European Control Conference} (CDC-ECC) Orlando, FL, USA, December 12-15, pp. 6510--6515, 2011.

\bibitem{NOW}
A. van der Schaft, D. Jeltsema, 
"Port-Hamiltonian Systems Theory: An Introductory Overview," 
{\it Foundations and Trends in Systems and Control},
vol. 1, no. 2/3, pp. 173--378, 2014. 

\bibitem{venkatraman}
A. Venkatraman, A.J. van der Schaft, ÓEnergy shaping of port-Hamiltonian systems by using alternate passive input-output pairsÓ, {\it European Journal of Control}, 6:1Ð13, 2010.

\bibitem{wang-lanzon2015}
J. Wang, A. Lanzon, I.R. Petersen,
"Robust output feedback consensus for networked negative-imaginary systems",
{\it IEEE Trans. Automatic Control}, 60, pp. 2547--2552, 2015.

\bibitem{wang2015}
J. Wang, A. Lanzon, I.R. Petersen,
"Robust output feedback consensus for multiple heterogeneous negative-imaginary systems",
54th {\it IEEE Conference on Decision and Control} (CDC), Osaka, Japan, December 15-18, 2371--2376, 2015.

\bibitem{xiong}
J. Xiong, I.R. Petersen, A. Lanzon,
\newblock
"A negative imaginary lemma and the stability of interconnections of linear negative imaginary systems", 
{\it IEEE Trans. Automatic Control}, 55(10), 2342--2347, 2010.

%

   
%
%
%
%
%

\end{thebibliography}
\end{document}